
\documentclass[12pt,oneside,draft]{amsart}

      \usepackage{amssymb}
      \usepackage[english]{babel}
      \usepackage{graphicx}
      \usepackage[applemac]{inputenc}
      \usepackage{float}
      \usepackage{wrapfig}
      \usepackage[a4paper=true,%
      breaklinks=true,%
      colorlinks=true,%
      linkcolor={blue},%
      citecolor={red},%
      pdftitle={A note on the equation $\zeta(s)=1$},%
      pdfauthor={Richard P. Brent, Jan van de Lune},%
      pdfkeywords={snapshot},%
      pdfstartview=FitH,
       ]{hyperref}

      \theoremstyle{plain}
      \newtheorem{theorem}{Theorem} 
      \newtheorem{lemma}[theorem]{Lemma}
      \newtheorem{corollary}[theorem]{Corollary}
      
      \theoremstyle{definition}

      \theoremstyle{remark}

      \newcommand{\N}{{\mathbb N}}
      \newcommand{\Z}{{\mathbb Z}}

      \newfont{\cmbsy}{cmbsy10}
      \newfont{\cmmib}{cmmib10}

      \def\Re{\mathrm{Re\,}}

      \makeatletter
      \def\@setcopyright{}
      \def\serieslogo@{}
      \makeatother

\begin{document}

   
   \author{Richard P. Brent$\,\,$}

   \address{Mathematical Sciences Institute, 
   Australian National University, Canberra, ACT 0200, Australia}
   \email{polya@rpbrent.com}


   \author{$\,\,$Jan van de Lune }
   \address{\noindent Langebuorren 49, 9074 CH Hallum, The Netherlands 
   \newline(Formerly at CWI, Amsterdam ) }
   \email{j.vandelune@hccnet.nl}
   

   \title[P\'olya's observation concerning Liouville's function]
   {A note on P\'olya's observation concerning Liouville's function}


   \begin{abstract}
   We show that a certain weighted mean of the Liouville function
   $\lambda(n)$ is negative. In this sense, we can say that the Liouville
   function is negative ``on average''.
   \end{abstract}




   \dedicatory{Dedicated to Herman J.~J.~te Riele on the occasion
     of his retirement from the CWI in January 2012}

   \date{\today}

   \maketitle
\section{Introduction}

   For $n\in \N$ let $n=\prod_{p|n} p^{e_p(n)}$ be the canonical prime 
   factorization of $n$ and let $\Omega(n):=\sum_{p\mid n} e_p(n)$.
   Here (as always in this paper) $p$ is prime.
   Thus, $\Omega(n)$ is the total number of prime factors of  $n$,  
   counting multiplicities.  For example: $\Omega(1)=0$, $\Omega(2)=1$, 
   $\Omega(4)=2$, $\Omega(6)=2$, $\Omega(8)=3$, $\Omega(16)=4$, 
   $\Omega(60)=4$, etc. 
   
   Define  Liouville's multiplicative function  $\lambda(n)=(-1)^{\Omega(n)}$. 
   For example $\lambda(1)=1$, $\lambda(2)=-1$, $\lambda(4)=1$, etc. 
   The M\"obius function $\mu(n)$ may be defined
   to be $\lambda(n)$ if $n$ is square-free, and $0$ otherwise.
   
   It is well-known, and follows easily from the Euler product for the
   Riemann zeta-function $\zeta(s)$, that $\lambda(n)$ has the Dirichlet
   generating function
   \begin{displaymath}
   \sum_{n=1}^\infty \frac{\lambda(n)}{n^s} = \frac{\zeta(2s)}{\zeta{(s)}}
   \end{displaymath}
   for $\Re(s) > 1$. This provides an alternative definition of $\lambda(n)$.

   Let $L(n) := \sum_{k \le n}\lambda(k)$ be the summatory function of
   the Liouville function; similarly $M(n) := \sum_{k \le n}\mu(k)$
   for the M\"obius function.

   The topic of this note is closely related to P\'olya's
   conjecture~\cite[1919]{Polya} that $L(n) \le 0$ for $n\ge2$. 

   P\'olya verified this for  $n \le 1500$  and Lehmer \cite[1956]{Lehmer} 
   checked it 
   for  $n \le 600\,000$. However, Ingham \cite[1942]{Ingham} cast doubt on
   the plausibility of P\'olya's conjecture by showing that it would imply
   not only the Riemann Hypothesis and simplicity of the zeros of
   $\zeta(s)$, but also the linear dependence over the rationals
   of the imaginary parts
   of the zeros $\rho$ of $\zeta(s)$ in the upper half-plane.
   Ingham cast similar doubt
   on the Mertens conjecture $|M(n)| \le \sqrt{n}$, 
   which was subsequently disproved in a
   remarkable \emph{tour de force} by Odlyzko and te Riele \cite[1985]{OR}.
   More recent results and improved bounds were given by
   Kotnik and te Riele \cite[2006]{KR}; see also 
   Kotnik and van de Lune \cite[2004]{KL}.

   In view of Ingham's results, it was no surprise when
   Haselgrove showed \cite[1958]{Haselgrove} that P\'olya's conjecture
   is false. He did not give an explicit counter-example,
   but his proof suggested that $L(u)$ might be positive in
   the vicinity of $u \approx 1.8474 \times 10^{361}$. 

   Sherman Lehman \cite[1960]{Lehman} gave an algorithm for calculating $L(n)$
   similar to Meissel's \cite[1885]{Meissel} formula 
   for the prime-counting function $\pi(x)$,
   and found the counter-example $L(906\,180\,359) = +1$.
   
   Tanaka \cite[1980]{Tanaka} found the smallest counter-example  
   $L(n) = +1$ for $n = 906\,150\,257$. 
   Walter M.~Lioen and Jan van de Lune [\emph{circa} 1994] scanned the range
   $n \le 2.5 \times 10^{11}$ using a fast sieve, but found no
   counter-examples beyond those of Tanaka.  More recently, Borwein,
   Ferguson and Mossinghoff~\cite[2008]{BFM} showed that
   $L(n) = +1\,160\,327$ for $n = 351\,753\,358\,289\,465$.

   Humphries \cite{Humphries1,Humphries2} showed that, under certain
   plausible but unproved hypotheses (including the Riemann Hypothesis),
   there is a limiting logarithmic distribution of $L(n)/\sqrt{n}$, and
   numerical computations show that 
   the logarithmic density of the set $\{n\in\N | L(n) < 0\}$
   is approximately $0.99988$. 
   Humphries' approach followed that of Rubinstein and Sarnak~\cite{RS},
   who investigated ``Chebyshev's bias'' in prime ``races''.

   Here we show in an elementary manner, and without any unproved
   hypotheses, that $\lambda(n)$ is (in a certain sense) ``negative
   on average''. To prove this, all that we need are some well-known facts
   about Mellin transforms, and the functional equation for the Jacobi theta
   function (which may be proved using Poisson summation).  Our main result
   is:

   \begin{theorem} \label{thm:1}
   There exists a positive constant $c$ such that for every
   (fixed) $N\in\N$
   \begin{displaymath}
   \sum_{n=1}^\infty \frac{\lambda(n)}{e^{n\pi x}+1}=-\frac{c}{\sqrt{x}}
   +\frac12+O(x^N) \qquad \text{as}\quad x\downarrow 0.
   \end{displaymath}
   \end{theorem}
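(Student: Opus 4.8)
The plan is to pass to the Mellin transform, where the left-hand side becomes an elementary function, and then recover the asymptotic expansion by sliding the Mellin inversion contour to the left and collecting residues. The ``$\lambda$ versus $\mu$'' phenomenon enters only through the clean shape of the transform.

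First I would set $F(x):=\sum_{n=1}^\infty \lambda(n)/(e^{n\pi x}+1)$ and compute, for $\Re(s)>1$ (where the relevant double series converges absolutely, so sum and integral may be interchanged),
\[
\widetilde F(s):=\int_0^\infty F(x)\,x^{s-1}\,dx=(1-2^{1-s})\,\pi^{-s}\,\Gamma(s)\,\zeta(2s).
\]
This combines the classical evaluation $\int_0^\infty y^{s-1}(e^y+1)^{-1}\,dy=(1-2^{1-s})\Gamma(s)\zeta(s)$ (expand $(e^y+1)^{-1}=\sum_{k\ge1}(-1)^{k-1}e^{-ky}$), the substitution $y=n\pi x$, and the Dirichlet series $\sum_{n\ge1}\lambda(n)n^{-s}=\zeta(2s)/\zeta(s)$ from the Introduction; the point is that the $\zeta(s)$ in the denominator cancels. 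Next I would note that the right-hand side is meromorphic on $\mathbb C$ with \emph{only} simple poles at $s=\frac12$ (from the pole of $\zeta(2s)$) and at $s=0$ (from $\Gamma$): the poles of $\Gamma(s)$ at the negative integers are annihilated by the trivial zeros of $\zeta(2s)$ there.

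Since $F$ is continuous on $(0,\infty)$, decays exponentially as $x\to\infty$, and satisfies $F(x)=O(1/x)$ as $x\downarrow0$ (elementary, as $0<(e^y+1)^{-1}<e^{-y/2}$), Mellin inversion gives $F(x)=\frac1{2\pi i}\int_{(c)}\widetilde F(s)\,x^{-s}\,ds$ for any $c>1$. For fixed $N$ I would push the contour to the line $\Re(s)=-N-\frac12$, picking up the two residues. Near $s=\frac12$ one has $\zeta(2s)=\frac1{2(s-1/2)}+O(1)$, so the residue there is $(1-\sqrt2)\,\pi^{-1/2}\Gamma(\tfrac12)\cdot\tfrac12\,x^{-1/2}=\frac{1-\sqrt2}{2}\,x^{-1/2}=-\frac{c}{\sqrt x}$ with $c:=(\sqrt2-1)/2>0$; and since $\Gamma$ has residue $1$ at $0$ while $\zeta(0)=-\frac12$, the residue at $s=0$ is $(1-2)(-\tfrac12)=\tfrac12$ (the pole being simple, no $\log x$ appears). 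These are precisely the asserted main terms.

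The remaining task, and the only step needing real care, is to bound the integral over $\Re(s)=-N-\frac12$ by $O(x^N)$ and to check that the horizontal segments joining the two contours vanish in the limit. On that line $|x^{-s}|=x^{N+1/2}$, so both follow once $\widetilde F(\sigma+it)$ is shown to decay faster than any power of $|t|$, uniformly for $\sigma$ in a bounded interval. This is where the functional equation is used: applying the functional equation of $\zeta$ to $\zeta(2s)$ --- equivalently, the transformation law $\theta(1/x)=\sqrt x\,\theta(x)$ of the Jacobi theta function, which follows from Poisson summation --- I would rewrite $\widetilde F(s)$ as a product of $\zeta(1-2s)$ (bounded on $\Re(s)=-N-\frac12$, since $\Re(1-2s)>1$ there), a ratio of $\Gamma$-factors, and bounded exponential factors, and then invoke Stirling's formula to obtain exponential decay in $|\Im s|$. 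I expect this uniform decay estimate on far-left vertical lines to be the main (though essentially routine) obstacle; once it is in hand, the horizontal pieces drop out, the shifted integral is $O(x^{N+1/2})=O(x^N)$, and the theorem follows.
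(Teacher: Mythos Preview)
Your argument is correct, and the residue computations give exactly the constant $c=(\sqrt2-1)/2$ that the paper obtains. However, your route differs from the paper's in a genuine way.

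The paper also starts with a Mellin transform, but only to prove the \emph{identity}
\[
\sum_{n\ge1}\frac{\lambda(n)}{e^{n\pi x}-1}=\phi(x):=\sum_{k\ge1}e^{-k^2\pi x},
\]
by showing both sides have transform $\zeta(2s)\Gamma(s)$. It then uses the elementary identity $(e^y+1)^{-1}=(e^y-1)^{-1}-2(e^{2y}-1)^{-1}$ to get the closed form $F(x)=\phi(x)-2\phi(2x)$, and finally applies the theta functional equation $\theta(x)=x^{-1/2}\theta(1/x)$ directly to this expression. The error term is then immediate: it is $x^{-1/2}\bigl(\phi(1/x)-\sqrt2\,\phi(1/(2x))\bigr)$, which is $O(e^{-\pi/x}/\sqrt x)=O(x^N)$ with no contour shifting and no Stirling estimates needed.

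By contrast, you fold the $(e^y+1)^{-1}$ integral directly into the transform to get $\widetilde F(s)=(1-2^{1-s})\pi^{-s}\Gamma(s)\zeta(2s)$, and then run the standard Mellin-inversion-plus-contour-shift machine. This is more systematic and generalises readily to other weights, but the price is the ``essentially routine'' decay estimate on far-left vertical lines that you correctly flag as the main remaining work. The paper's approach trades that analytic step for an exact theta identity, so the error bound comes for free; yours trades the theta identity for a uniform treatment via residues. Both are valid; the paper's is shorter and more self-contained for this particular problem.
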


Thus, a weighted mean of $\{\lambda(n)\}$, with positive weights initially
close to a constant $(1/2)$ and becoming small for $n \gg 1/x$, is negative 
for $x < x_0$ and tends to $-\infty$ as $x \downarrow 0$.

In the final section we mention some easy results on the M\"obius function
$\mu(n)$ to contrast its behaviour with that of $\lambda(n)$.

\pagebreak

\section{Proof of Theorem \ref{thm:1}}

We prove Theorem \ref{thm:1} in three steps.

   \bigskip
   
   \emph{Step 1.} 
   For $x > 0$, 
   \begin{displaymath}
   \sum_{n=1}^\infty \frac{\lambda(n)}{e^{n\pi x}-1}=\phi(x)
   = \frac{\theta(x) -1}{2}\,,
   \end{displaymath}
   where
   \begin{displaymath}
   \phi(x):=\sum_{k=1}^\infty e^{-k^2\pi x}, \qquad
   \theta(x):= \sum_{k \in \Z} e^{-k^2\pi x}.
   \end{displaymath}
   
   \emph{Step 2.}
   For $x > 0$,
   \begin{displaymath}
   \sum_{n=1}^\infty \frac{\lambda(n)}{e^{n\pi x}+1}=\phi(x)-2\phi(2x).
   \end{displaymath}
   
   \emph{Step 3.} Theorem \ref{thm:1} now follows from the functional
   equation 
   \begin{displaymath}
   \theta(x) = \frac{1}{\sqrt{x}}\,\theta\left(\frac{1}{x}\right)
   \end{displaymath}
   for the Jacobi theta function $\theta(x)$.
   
   \begin{proof}[Proof of Theorem \ref{thm:1}]~\\

   (1) In the following, we assume that $\Re(s) > 1$, so the Dirichlet
   series and integrals are absolutely convergent, and interchanging the
   orders of summation and integration is easy to justify.

   As mentioned above, it is well-known that
   \begin{displaymath}
   \sum_{n=1}^\infty\frac{\lambda(n)}{n^s}
   =\prod_p\left({1+p^{-s}}\right)^{-1}
   =\prod_p\frac{1-p^{-s}}
   {1-p^{-2s}} = \frac{\zeta(2s)}{\zeta(s)}.
   \end{displaymath}
   Define
   \begin{displaymath}
   f(x):=\sum_{n=1}^\infty \frac{\lambda(n)}{e^{n x}-1},\qquad (x>0).
   \end{displaymath}
   We will use the well known fact that if two sufficiently well-behaved 
   functions (such as ours below) have the same Mellin transform 
   then the functions are equal.  

   The Mellin transform of  $f(x)$ is
   \begin{eqnarray*}
   {F}(s) &:=& 
   \int_0^\infty f(x)x^{s-1}\,{\rm d}x=\int_0^\infty 
   \sum_{n=1}^\infty \frac{\lambda(n)}{e^{nx}-1}x^{s-1}\,{\rm d}x\\
   &=&\sum_{n=1}^\infty \lambda(n)\int_0^\infty\frac{x^{s-1}}{e^{nx}-1}    
   \,{\rm d}x
   =\Bigl(\sum_{n=1}^\infty\frac{\lambda(n)}{n^s}\Bigr)\times    
   \int_0^\infty\frac{x^{s-1}}{e^{x}-1}\,{\rm d}x\\
   &=&\frac{\zeta(2s)}{\zeta(s)}\times
   \zeta(s)\Gamma(s) =\zeta(2s)\Gamma(s).
   \end{eqnarray*}

   We also have
   \begin{eqnarray*}
   \int_0^\infty \phi\Bigl(\frac{x}{\pi}\Bigr)x^{s-1}\,{\rm d}x
   &=&\int_0^\infty
   \Bigl(\sum_{n=1}^\infty e^{-n^2x}\Bigr)x^{s-1}\,{\rm d}x\\
   &=&\Bigl(\sum_{n=1}^\infty
   \frac{1}{n^{2s}}\Bigr)\times\int_0^\infty e^{-x}x^{s-1}\,{\rm d}x=
    \zeta(2s)\Gamma(s),
   \end{eqnarray*}
   so the Mellin transforms of $f(x)$ and of $\phi(x/\pi)$ are
   identical.  Thus
   $f(x)=\phi({x}/{\pi})$.
   Replacing $x$ by $\pi x$, we see that
   \begin{displaymath}
   \sum_{n=1}^\infty \frac{\lambda(n)}{e^{n\pi x}-1}=\sum_{k=1}^\infty 
   e^{-k^2\pi x},
   \end{displaymath}
   completing the proof of step (1).
   \smallskip
   
   (2) Observe that 
   \begin{displaymath}
   \frac{1}{e^{n\pi x}+1}=\frac{1}{e^{n\pi x}-1}-
   \frac{2}{e^{2n\pi x}-1},
   \end{displaymath} 
   so, from step (1),
   \begin{displaymath}
   \sum_{n=1}^\infty \frac{\lambda(n)}
   {e^{n\pi x}+1}=\phi(x)-2\phi(2x).
   \end{displaymath}
   \smallskip   

   (3) Using the functional equation for $\theta(x)$, we easily find
   that 
   \begin{displaymath}
   \phi(x)-2\phi(2x)=-\frac{c}{\sqrt{x}}+\frac12+
   \frac{1}{\sqrt{x}}\Bigl(\phi\Bigl(\frac{1}{x}\Bigr)-\sqrt{2}\,
   \phi\Bigl(\frac{1}{2x}\Bigr)\Bigr)
   \end{displaymath}
   with
   $c={(\sqrt{2}-1)}/{2} > 0$,
   proving our claim, since the ``error'' term is bounded by
   $\phi(1/x)/\sqrt{x} \sim \exp(-\pi/x)/\sqrt{x} = O(x^N)$
   as $x \downarrow 0$ (for any fixed exponent $N$).
   \end{proof}
                   
\section{Remarks on the M\"obius function}
                                            
We give some further applications of the identity
\begin{equation}
\tag{$*$}\label{eq:starred}
\frac{1}{z+1}=\frac{1}{z-1}-\frac{2}{z^2-1}
\end{equation}
that we used (with $z = e^{n\pi x}$) in proving step (2) above.
   
\begin{lemma} \label{lemma:2}
For $|x| < 1$, we have
\begin{displaymath}
   {\sum_{n=1}^\infty \mu(n)\frac{x^n}{x^n+1}} = x - 2x^2.
\end{displaymath}
\end{lemma}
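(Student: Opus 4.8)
The plan is to mimic the proof of step (2) of Theorem~\ref{thm:1}, but now working with ordinary Dirichlet series / power series rather than Mellin transforms. First I would recall the classical identity $\sum_{n=1}^\infty \mu(n)\,y^n/(1-y^n) = y$ for $|y|<1$ (the Lambert series for the M\"obius function), which follows from $\sum_{d\mid n}\mu(d) = [n=1]$ after expanding $y^n/(1-y^n) = \sum_{m\ge 1} y^{nm}$ and collecting powers of $y$. This is the multiplicative-number-theory analogue of the Mellin computation $\sum \mu(n)/n^s = 1/\zeta(s)$ used implicitly in the $\lambda$ case.

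Next I would apply the algebraic identity~\eqref{eq:starred} with $z = x^{-n}$, equivalently
\begin{displaymath}
\frac{x^n}{x^n+1} = \frac{x^n}{1-x^n}\cdot\frac{1-x^n}{1+x^n}
= \frac{x^n}{1-x^n} - \frac{2x^{2n}}{1-x^{2n}},
\end{displaymath}
which one checks directly by clearing denominators. Summing against $\mu(n)$ and invoking absolute convergence for $|x|<1$ to justify the term-by-term manipulation, the first sum is $\sum_n \mu(n)\,x^n/(1-x^n) = x$ by the Lambert series identity, and the second sum is $2\sum_n \mu(n)\,(x^2)^n/(1-(x^2)^n) = 2x^2$ by the same identity applied at $x^2$ (legitimate since $|x^2|<1$). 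Subtracting gives $x - 2x^2$, as claimed.

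The only genuine point requiring care — the ``main obstacle,'' though a mild one — is the justification of interchanging the two infinite summations (over $n$ and over the geometric-series index $m$), and of splitting the single convergent series into the difference of two series. For $|x|<1$ this is routine: $\sum_{n,m}|x|^{nm} = \sum_n |x|^n/(1-|x|^n) < \infty$, so Fubini/Tonelli for series applies, and each of the two resulting Lambert series converges absolutely, so the subtraction is valid. I would state this convergence bound in one line and then present the three-line computation above.
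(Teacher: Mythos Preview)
Your proof is correct and follows essentially the same route as the paper's: both invoke the Lambert series identity $\sum_n \mu(n)\,x^n/(1-x^n)=x$ and then apply the algebraic identity~\eqref{eq:starred} to pass from $1-x^n$ to $1+x^n$, yielding $x-2x^2$. The only cosmetic difference is that the paper first substitutes $y=1/x$ and works with $\sum_n \mu(n)/(y^n\pm 1)$ before substituting back, whereas you stay in the variable $x$ throughout and add an explicit line on absolute convergence; the underlying argument is identical.
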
  

\begin{proof}[Proof]
Assume that $|x| < 1$.
   It is well known that 
   \begin{displaymath}
   {\sum_{n=1}^\infty \mu(n)\frac{x^n}{1-x^n} = x},
   \end{displaymath}
in fact this ``Lambert series'' identity is equivalent to the Dirichlet
series identity $\sum \mu(n)/n^s = 1/\zeta(s)$.
Writing $y = 1/x$, we have
   \begin{displaymath}
   \sum_{n=1}^\infty \frac{\mu(n)}{y^n-1} = 1/y.
   \end{displaymath}
If follows on taking $z = y^n$ in our identity (\ref{eq:starred}) that
   \begin{equation*}
   \sum_{n=1}^\infty \frac{\mu(n)}{y^n+1}=
   \sum_{n=1}^\infty \frac{\mu(n)}{y^n-1}-
   2\sum_{n=1}^\infty \frac{\mu(n)}{y^{2n}-1}
   = y^{-1} - 2y^{-2}.
   \end{equation*}
Replacing $y$ by $1/x$ gives the result.
   \end{proof}

\begin{corollary}
\begin{displaymath}
   {\sum_{n=1}^\infty \frac{\mu(n)}{2^n+1}} = 0.
\end{displaymath}
\end{corollary}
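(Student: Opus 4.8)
The plan is to read off the corollary as the special case $x = 1/2$ of Lemma~\ref{lemma:2}. Setting $x = 1/2$ in the identity $\sum_{n=1}^\infty \mu(n)\,x^n/(x^n+1) = x - 2x^2$, the right-hand side becomes $\tfrac12 - 2\cdot\tfrac14 = \tfrac12 - \tfrac12 = 0$, so it remains only to rewrite the left-hand side in the form stated in the corollary. With $x = 1/2$ one has $x^n = 2^{-n}$, and the general term is
\begin{displaymath}
\mu(n)\,\frac{x^n}{x^n+1} = \mu(n)\,\frac{2^{-n}}{2^{-n}+1} = \frac{\mu(n)}{1 + 2^n}\,,
\end{displaymath}
after multiplying numerator and denominator by $2^n$. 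Summing over $n \ge 1$ then gives $\sum_{n=1}^\infty \mu(n)/(2^n+1) = 0$, which is exactly the assertion.

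The only point needing a word of justification is that $x = 1/2$ is legitimately in the range of validity of the lemma, namely $|x| < 1$; since $1/2 < 1$, the substitution is valid and the series converges absolutely (indeed it converges geometrically, dominated by $\sum 2^{-n}$). So I would simply state: "Take $x = 1/2$ in Lemma~\ref{lemma:2}; then $x^n/(x^n+1) = 1/(2^n+1)$ and $x - 2x^2 = 0$, giving the result."

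There is no real obstacle here — the corollary is a one-line specialization, and the entire content has already been established in the lemma. The mildly amusing feature worth noting (though not required for the proof) is the cancellation $x = 2x^2$ occurring precisely at $x = 1/2$, which is why this particular value is singled out; one could equally well record that the sum $\sum \mu(n)/(3^n+1) = 1/3 - 2/9 = 1/9$ is positive, but $0$ is the cleaner statement. I would keep the proof to a single sentence.

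\begin{proof}[Proof]
Put $x = 1/2$ in Lemma~\ref{lemma:2}. Since $|1/2| < 1$ the identity applies, and with $x^n = 2^{-n}$ we have $x^n/(x^n+1) = 1/(2^n+1)$, while the right-hand side is $x - 2x^2 = \tfrac12 - \tfrac12 = 0$.
\end{proof}
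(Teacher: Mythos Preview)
Your proof is correct and is exactly the paper's approach: the paper's proof is the single line ``Take $x = 1/2$ in Lemma~\ref{lemma:2}.'' Your additional verification that $x^n/(x^n+1) = 1/(2^n+1)$ and $x - 2x^2 = 0$ simply makes explicit what the paper leaves to the reader.
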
  

\begin{proof}
Take $x = 1/2$ in Lemma \ref{lemma:2}.
\end{proof}

   
   If follows from Lemma~\ref{lemma:2} that
   \begin{displaymath}
   \lim_{x\uparrow1}
   \sum_{n=1}^\infty \mu(n)\frac{x^n}{x^n+1} =-1,
   \end{displaymath}
   so that one might say that in this sense $\mu(n)$ is 
   negative on average.
   However, this is much weaker than what we showed in Theorem~\ref{thm:1} 
   for $L(n)$,
   where the corresponding sum tends to $-\infty$.
   The ``complex-analytic'' reason for this difference
   is that $\zeta(2s)/\zeta(s)$ has a pole (with negative residue) 
   at $s = 1/2$, but $1/\zeta(s)$ is regular at $s=1$.

\end{document}